\def\R{\mathbb R}
\def\e{\varepsilon}
\def\vp{\varphi}
\newcommand{\la}{\langle}
\newcommand{\ra}{\rangle}
\begin{document}
%
%\frontmatter          % for the preliminaries
%
\pagestyle{headings}  % switches on printing of running heads
%\addtocmark{Hamiltonian Mechanics} % additional mark in the TOC
%
%
\title{Fast Primal-Dual Gradient Method for Strongly Convex Minimization Problems with Linear Constraints}
\titlerunning{Fast Primal-Dual GM}  % abbreviated title (for running head)
%                                     also used for the TOC unless
%                                     \toctitle is used
%
\author{
Alexey Chernov\inst{1} \and
Pavel Dvurechensky\inst{2} \and
Alexender Gasnikov\inst{3}
}
\authorrunning{Alexey Chernov et al.} % abbreviated author list (for running head)
%
%%%% list of authors for the TOC (use if author list has to be modified)
\tocauthor{Alexey Chernov, Pavel Dvurechensky, Alexender Gasnikov}
\institute{Moscow Institute of Physics and Technology, Dolgoprudnyi 141700, Moscow Oblast, Russia,\\
\email{alexmipt@mail.ru}
\and
Weierstrass Institute for Applied Analysis and Stochastics, Berlin 10117, Germany, \\
Institute for Information Transmission Problems, Moscow 127051, Russia,
\email{pavel.dvurechensky@wias-berlin.de}
\and
Moscow Institute of Physics and Technology, Dolgoprudnyi 141700, Moscow Oblast, Russia,\\
Institute for Information Transmission Problems, Moscow 127051, Russia, 
\email{gasnikov@yandex.ru}
}

\maketitle              % typeset the title of the contribution

\begin{abstract}
In this paper we consider a class of optimization problems with a strongly convex objective function and the feasible set given by an intersection of a simple convex set with a set given by a number of linear equality and inequality constraints. A number of optimization problems in applications can be stated in this form, examples being the entropy-linear programming, the ridge regression, the elastic net, the regularized optimal transport, etc.  
We extend the Fast Gradient Method applied to the dual problem in order to make it primal-dual so that it allows not only to solve the dual problem, but also to construct nearly optimal and nearly feasible solution of the primal problem. 
We also prove a theorem about the convergence rate for the proposed algorithm in terms of the objective function and the linear constraints infeasibility.
\keywords{convex optimization, algorithm complexity, entropy-linear programming, dual problem, primal-dual method}
\end{abstract}
\section*{Introduction}
In this paper we consider a constrained convex optimization problem of the following form
\begin{equation}
(P_1) \quad \quad \min_{x\in Q \subseteq E} \left\{ f(x) : A_1x =b_1, A_2x \leq b_2\right\},
\notag
%\label{eq:P_1}
\end{equation}
where $E$ is a finite-dimensional real vector space, $Q $ is a simple closed convex set, $A_1$, $A_2$ are given linear operators from $E$ to some finite-dimensional real vector spaces $H_1$ and $H_2$ respectively, $b_1 \in H_1$, $b_2 \in H_2$ are given, $f(x)$ is a $\nu$-strongly convex function  on $Q$ with respect to some chosen norm $\|\cdot\|_E$ on $E$. The last means that for any $x,y \in Q$ $f(y) \geq f(x) + \la \nabla f(x) , y-x \ra + \frac{\nu}{2}\|x-y\|^2_E$, where $\nabla f(x)$ is any subgradient of $f(x)$ at $x$ and hence is an element of the dual space $E^*$. Also we denote the value of a linear function $g \in E^*$ at $x\in E$ by $\la g, x \ra$.

Problem $(P_1)$ captures a broad set of optimization problems arising in applications. The first example is the classical entropy-linear programming (ELP) problem \cite{Fang_1997_Entropy} which arises in many applications such as econometrics \cite{Golan_1996_Maximum}, modeling in science and engineering \cite{Kapur_1989_Maximum}, especially in the modeling of traffic flows \cite{Gasnikov_2013_Introduction} and the IP traffic matrix estimation \cite{Rahman_2006_IP,Zhang_2005_Estimating}. Other examples are the ridge regression problem \cite{Hastie_2009_Elements} and the elastic net approach \cite{Zou_2005_Regularization} which are used in machine learning. Finally, the problem class $(P_1)$ covers problems of regularized optimal transport (ROT) \cite{Cuturi_2013_Sinkhorn} and regularized optimal partial transport (ROPT) \cite{Benamou_2015_Iterative}, which recently have become popular in application to the image analysis.

The classical balancing algorithms such as \cite{Cuturi_2013_Sinkhorn,Bregman_1967_Relaxation,Bregman_1967_Proof} are very efficient for solving ROT problems or special types of ELP problem, but they can deal only with linear equality constraints of special type and their rate of convergence estimates are rather impractical \cite{Franklin_1989_Scaling}.  
In \cite{Benamou_2015_Iterative} the authors provide a generalization but only for the ROPT problems which are a particular case of Problem $(P_1)$ with linear inequalities constraints of a special type and no convergence rate estimates are provided.
Unfortunately the existing balancing-type algorithms for the ROT and ROPT problems become very unstable when the regularization parameter is chosen very small, which is the case when one needs to calculate a good approximation to the solution of the optimal transport (OT) or the optimal partial transport (OPT) problem.

In practice the typical dimensions of the spaces $E,H_1,H_2$ range from thousands to millions, which makes it natural to use a first-order method to solve Problem $(P_1)$. 
A common approach to solve such large-scale Problem $(P_1)$ is to make the transition to the Lagrange dual problem and solve it by some first-order method. Unfortunately, existing methods which elaborate this idea have at least two drawbacks. 
Firstly, the convergence analysis of the Fast Gradient Method (FGM) \cite{Nesterov_2005_Smooth} can not be directly applied since it is based on the assumption of boundedness of the feasible set in both the primal and the dual problem, which does not hold for the Lagrange dual problem.
A possible way to overcome this obstacle is to assume that the solution of the dual problem is bounded and add some additional constraints to the Lagrange dual problem in order to make the dual feasible set bounded. But in practice the bound for the solution of the dual problem is  usually not known. 
In \cite{Gasnikov_2015_Entropy} the authors use this approach with additional constraints and propose a restart technique to define the unknown bound for the optimal dual variable value. 
Unfortunately, the authors consider only the classical ELP problem with only the equality constraints and it is not clear whether their technique can be applied for Problem $(P_1)$ with inequality constraints.  
Secondly, it is important to estimate the rate of convergence not only in terms of the error in the solution of the Lagrange dual problem at it is done in \cite{Polyak_2013_Dual,Necoara_2008_Applications} but also in terms of the error in the solution of the primal problem \footnote{The absolute value here is crucial since $x_k$ may not satisfy linear constraints and hence $f(x_k)-Opt[P_1]$ could be negative.} $|f(x_k)-Opt[P_1]|$ and the linear constraints infeasibility $\|A_1x_k-b_1\|_{H_1}$, $\|(A_2x_k-b_2)_+\|_{H_2}$, where vector $v_+$ denotes the vector with components $[v_+]_i=(v_i)_+=\max\{v_i,0\}$, $x_k$ is the output of the algorithm on the $k$-th iteration, $Opt[P_1]$ denotes the optimal function value for Problem $(P_1)$.
Alternative approaches \cite{Goldstein_2012_Fast,Shefi_2014_Rate} based on the idea of the method of multipliers and the quasi-Newton methods such as L-BFGS also do not allow to obtain the convergence rate for the approximate primal solution and the linear constraints infeasibility. 

Our contributions in this work are the following. 
We extend the Fast Gradient Method \cite{Nesterov_2005_Smooth,Devolder_2014_First} applied to the dual problem in order to make it primal-dual so that it allows not only to solve the dual problem, but also to construct nearly optimal and nearly feasible solution to the primal problem $(P_1)$. We also equip our method with a stopping criterion which allows an online control of the quality of the approximate primal-dual solution.
Unlike \cite{Cuturi_2013_Sinkhorn,Benamou_2015_Iterative,Polyak_2013_Dual,Necoara_2008_Applications,Goldstein_2012_Fast,Shefi_2014_Rate,Gasnikov_2015_Entropy} we provide the estimates for the rate of convergence in terms of the error in the solution of the primal problem $|f(x_k)-Opt[P_1]|$ and the linear constraints infeasibility $\|A_1x_k-b_1\|_{H_1}$, $\|(A_2x_k-b_2)_+\|_{H_2}$.
In the contrast to the estimates in \cite{Nesterov_2005_Smooth}, our estimates do not rely on the assumption that the feasible set of the dual problem is bounded. 
At the same time our approach is applicable for the wider class of problems defined by $(P_1)$ than approaches in \cite{Cuturi_2013_Sinkhorn,Gasnikov_2015_Entropy}. 
%We also show that the proposed in this paper proof of the convergence rate theorem remains valid for any algorithm possessing a special property which is valid for example for the Universal Fast Gradient Method \cite{Nesterov_2015_Universal}. This allows to apply proposed approach to problems in which the function $f(x)$ is not strongly convex.
%lower bounds
In the computational experiments we show that our approach allows to solve 
%ELP problems more efficiently than the algorithm of \cite{Gasnikov_2015_Entropy} and 
ROT problems more efficiently than the algorithms of \cite{Cuturi_2013_Sinkhorn,Benamou_2015_Iterative,Gasnikov_2015_Entropy} when the regularization parameter is small.

%In this work we propose an algorithm for solving general strongly convex minimization problems with both equality and inequality constraints. In the Section \ref{S:1}, we provide problem statement and some preliminaries for constructing the algorithm, namely we describe Fast Gradient Method and its properties. In the Section \ref{S:2}, we describe our algorithm and provide its complexity analysis. Finally, in the Section \ref{S:3}, we provide some results of numerical experiments. We finish the paper with conclusion.

\section{Preliminaries}
\label{S:prel}

\subsection{Notation}
For any finite-dimensional real vector space $E$ we denote by $E^*$ its dual. We denote the value of a linear function $g \in E^*$ at $x\in E$ by $\la g, x \ra$. Let $\|\cdot\|_E$ denote some norm on $E$ and $\|\cdot\|_{E,*}$ denote the norm on $E^*$ which is dual to $\|\cdot\|_E$
$$
\|g\|_{E,*} = \max_{\|x\|_E \leq 1} \la g, x \ra.
$$
In the special case when $E$ is a Euclidean space we denote the standard Euclidean norm by $\|\cdot\|_2$. Note that in this case the dual norm is also Euclidean.
By $\partial f(x)$ we denote the subdifferential of the function $f(x)$ at a point $x$. Let $E_1, E_2$ be two finite-dimensional real vector spaces. For a linear operator $A:E_1 \to E_2$ we define its norm as follows
$$
\|A\|_{E_1 \to E_2} = \max_{x \in E_1,u \in E_2^*} \{\la u, A x \ra : \|x\|_{E_1} = 1, \|u\|_{E_2,*} = 1 \}.
$$
For a linear operator $A:E_1 \to E_2$ we define the adjoint operator $A^T: E_2^* \to E_1^*$ in the following way
$$
\la u, A x \ra = \la A^T u, x \ra, \quad \forall u \in E_2^*, \quad x \in E_1.
$$
We say that a function $f: E \to \R$ has a $L$-Lipschitz-continuous gradient if it is differentiable and its gradient satisfies Lipschitz condition
$$
\|\nabla f(x) - \nabla f(y) \|_{E,*} \leq L \|x-y\|_E.
$$

%The main problem considered in the paper is as follows
%\begin{equation}
%(P_1) \quad \quad \min_{x\in Q} \left\{ f(x) : A_1x =b_1, A_2x \leq b_2\right\},
%\notag
%%\label{eq:P_1}
%\end{equation}
%where $f(x)$ is strongly convex function with constant $\lambda^{(2)}[f]$ with respect to the norm $\|\cdot\|$, $Q \subseteq E$ is a simple closed convex set, $A_1$, $A_2$ are linear operators from $E$ to some finite-dimensional real vector spaces $H_1$ and $H_2$ respectively, $b_1 \in H_1$, $b_2 \in H_2$.

We characterize the quality of an approximate solution to Problem $(P_1)$ by three quantities $\e_f,\e_{eq}, \e_{in} > 0$ and say that a point $\hat{x}$ is an $(\e_f,\e_{eq}, \e_{in})$-solution to Problem $(P_1)$ if the following inequalities hold 
\begin{equation}
|f(\hat{x}) - Opt[P_1]| \leq \e_f , \quad \|A_1\hat{x}-b_1\|_2 \leq \e_{eq}, \quad \|(A_2\hat{x}-b_2)_+\|_2 \leq \e_{in}.
\label{eq:sol_def}
\end{equation}
Here $Opt[P_1]$ denotes the optimal function value for Problem $(P_1)$ and the vector $v_+$ denotes the vector with components $[v_+]_i=(v_i)_+=\max\{v_i,0\}$.
Also for any $t \in R$ we denote by  $\lceil t \rceil$ the smallest integer greater than or equal to $t$.

\subsection{Dual Problem}

Let us denote %$\lambda = (\lambda^{(1)},\lambda^{(2)})^T \in H_1^* \times H_2^*$, 
$\Lambda =\{\lambda = (\lambda^{(1)},\lambda^{(2)})^T \in H_1^* \times H_2^*: \lambda^{(2)} \geq 0\}$. The Lagrange dual problem to Problem $(P_1)$ is
\begin{equation}
(D_1) \quad \quad \max_{\lambda \in \Lambda} \left\{ - \la \lambda^{(1)}, b_1 \ra - \la \lambda^{(2)}, b_2 \ra + \min_{x\in Q} \left( f(x) + \la A_1^T \lambda^{(1)} + A_2^T \lambda^{(2)} ,x \ra \right) \right\}.
\notag
%\label{eq:D_1}
\end{equation}

%where $\lambda^{(1)} \in H_1^*$, $\lambda^{(2)} \in H_2^*$ are Lagrange multipliers.

%It is natural to assume that $H_1$, $H_2$ are euclidean spaces with euclidean norm. Then the same holds for their conjugate spaces $H_1^*, H_2^*$. 
We rewrite Problem $(D_1)$ in the equivalent form of a minimization problem.
\begin{align}
& (P_2) \quad \min_{\lambda \in \Lambda} \left\{   \la \lambda^{(1)}, b_1 \ra + \la \lambda^{(2)}, b_2 \ra + \max_{x\in Q} \left( -f(x) - \la A_1^T \lambda^{(1)} + A_2^T \lambda^{(2)} ,x \ra \right) \right\}. \notag
%\label{eq:P_2}
\end{align}
We denote
\begin{equation}
\vp(\lambda) = \vp(\lambda^{(1)}, \lambda^{(2)}) = \la \lambda^{(1)}, b_1 \ra + \la \lambda^{(2)}, b_2 \ra + \max_{x\in Q} \left( -f(x) - \la A_1^T \lambda^{(1)} + A_2^T \lambda^{(2)} ,x \ra \right)
\label{eq:vp_def}
\end{equation}
%where $\lambda =\{\lambda = (\lambda^{(1)},\lambda^{(2)})^T: \lambda^{(2)} \geq 0, \|\lambda^{(1)}\|_2 \leq 2R, \|\lambda^{(2)}\|_2 \leq 2R \}$.
Note that the gradient of the function $\vp(\lambda)$ is equal to (see e.g. \cite{Nesterov_2005_Smooth})
\begin{equation}
\nabla \vp(\lambda) = \left(
\begin{aligned}
b_1 - A_1 x (\lambda)\\
b_2 - A_2 x (\lambda)\\
\end{aligned}
 \right),
\label{eq:nvp}
\end{equation}
where $x (\lambda)$ is the unique solution of the problem
\begin{equation}
\max_{x\in Q} \left( -f(x) - \la A_1^T \lambda^{(1)} + A_2^T \lambda^{(2)} ,x \ra \right).
\label{eq:inner}
\end{equation}
Note that this gradient is Lipschitz-continuous (see e.g. \cite{Nesterov_2005_Smooth}) with constant
$$
L = \frac{1}{\nu}\left(\|A_1\|_{E \to H_1}^2+ \|A_2\|_{E \to H_2}^2\right).
$$
It is obvious that
\begin{equation}
Opt[D_1]=-Opt[P_2].
\label{eq:D_1_P_2_sol}
\end{equation}
Here by $Opt[D_1]$, $Opt[P_2]$ we denote the optimal function value in Problem $(D_1)$ and Problem $(P_2)$ respectively. 
Finally, the following inequality follows from the weak duality
\begin{equation}
Opt[P_1] \geq Opt[D_1].
\label{eq:wD}
\end{equation}

%Let us also write the dual problem for the problem $(P_2)$.
%\begin{align}
%(D_2) \quad
%& \max_{x\in Q}  \left\{ -f(x) + \min_{\lambda \in \lambda}\left(  \la \lambda^{(1)}, b_1 - A_1 x \ra + \la \lambda^{(2)}, b_2 -A_2 x \ra \right) \right\} = \notag \\
%& =\max_{x\in Q}  \left\{ -f(x) -2 R \|A_1 x - b_1 \|_2- 2R \|(A_2 x - b_2)_+ \|_2  \right\}.
%\notag
%%\label{eq:D_2}
%\end{align}

\subsection{Main Assumptions}
\label{S:main_assum}
We make the following two main assumptions
\begin{enumerate}
	\item The problem \eqref{eq:inner} is simple in the sense that for any $x \in Q$ it has a closed form solution or can be solved very fast up to the machine precision.
	\item The dual problem $(D_1)$ has a solution $\lambda^*=(\lambda^{*(1)},\lambda^{*(2)})^T$ and there exist some $R_1, R_2 >0$ such that
	\begin{equation}
	\|\lambda^{*(1)}\|_{2} \leq R_1 < +\infty, \quad \|\lambda^{*(2)}\|_{2} \leq R_2 < +\infty. 
	\label{eq:l_bound}
	\end{equation}	 
\end{enumerate}

\subsection{Examples of Problem $(P_1)$}
In this subsection we describe several particular problems which can be written in the form of Problem $(P_1)$.

\textbf{Entropy-linear programming problem \cite{Fang_1997_Entropy}.}

\begin{equation}
\min_{x \in S_n(1)} \left\{ \sum_{i=1}^n x_i \ln \left(x_i/\xi_i\right)  : Ax =b \right\}
\notag
%\label{eq:ELPgen}
\end{equation}
for some given $\xi \in \R^n_{++} = \{x \in \R^n: x_i > 0 , i=1,...,n \}$. Here $S_n(1) = \{x \in \R^n : \sum_{i=1}^n x_i = 1, x_i \geq 0 , i=1,...,n \}$.

\textbf{Regularized optimal transport problem \cite{Cuturi_2013_Sinkhorn}.}
\begin{equation}
\min_{X \in \R^{p \times p}_+} \left\{ \gamma \sum_{i,j=1}^p x_{ij} \ln x_{ij} + \sum_{i,j=1}^p c_{ij} x_{ij}: Xe=a_1, X^Te=a_2 \right\},
%\notag
\label{eq:ROT}
\end{equation}
where $e \in \R^p$ is the vector of all ones, $a_1,a_2 \in S_p(1)$, $c_{ij} \geq 0, i,j=1,...,p$ are given, $\gamma > 0$ is the regularization parameter, $X^T$ is the transpose matrix of $X$, $x_{ij}$ is the element of the matrix $X$ in the $i$th row and the $j$th column.

\textbf{Regularized optimal partial transport problem \cite{Benamou_2015_Iterative}.}
\begin{equation}
\min_{X \in \R^{p \times p}_+} \left\{ \gamma \sum_{i,j=1}^p x_{ij} \ln x_{ij} + \sum_{i,j=1}^p c_{ij} x_{ij}: Xe \leq a_1, X^Te \leq a_2, e^T X e = m \right\},
\notag
%\label{eq:ELPgen}
\end{equation}
where $a_1,a_2 \in \R^p_+$, $c_{ij} \geq 0, i,j=1,...,p$, $m >0$ are given, $\gamma > 0$ is the regularization parameter  and the inequalities should be understood component-wise.
%\section{Problem Statement}
%\label{S:1}

\section{Algorithm and Theoretical Analysis}
\label{S:2}
We extend the Fast Gradient Method \cite{Nesterov_2005_Smooth,Devolder_2014_First} in order to make it primal-dual so that it allows not only to solve the dual problem $(P_2)$, but also to construct a nearly optimal and nearly feasible solution to the primal problem $(P_1)$. We also equip it with a stopping criterion which allows an online control of the quality of the approximate primal-dual solution.
Let $\{\alpha_i\}_{i \geq 0}$ be a sequence of coefficients satisfying
\begin{align}
& \alpha_0 \in (0,1],  \notag \\ %\label{eq:alpbet1} \\
& \alpha_k^2  \leq \sum_{i=0}^k{\alpha_i} , \quad \forall k \geq 1. \notag% \label{eq:alpbetB}
\end{align}
We define also $C_k = \sum_{i=0}^k{\alpha_i}$ and $\tau_{i}=\frac{\alpha_{i+1}}{C_{i+1}}$. Usual choice is $\alpha_i=\frac{i+1}{2}$, $i\geq 0$. In this case $C_k=\frac{(k+1)(k+2)}{4}$. Also we define the Euclidean norm on $H_1^* \times H_2^*$ in a natural way
$$
\|\lambda\|_2^2=\|\lambda^{(1)}\|_2^2 + \|\lambda^{(2)}\|_2^2
$$
for any $\lambda=(\lambda^{(1)},\lambda^{(2)})^T \in H_1^* \times H_2^*$.
Unfortunately we can not directly use the convergence results of \cite{Nesterov_2005_Smooth,Devolder_2014_First} for the reason that the feasible set $\Lambda$ in the dual problem $(D_1)$ is unbounded and the constructed sequence $\hat{x}_k$ may possibly not satisfy the equality and inequality constraints. 

\begin{algorithm}[h!]
\SetAlgoNoLine
\KwIn{The sequence $\{\alpha_i\}_{i \geq 0}$, accuracy $\tilde{\e}_f,\tilde{\e}_{eq},\tilde{\e}_{in} > 0$}
\KwOut{The point $\hat{x}_k$.}

Choose  $\lambda_0 = (\lambda^{(1)}_0,\lambda^{(2)}_0)^T = 0$.

Set $k$ = 0.

\Repeat{$|f(\hat{x}_k)+\vp(\eta_k)| \leq \tilde{\e}_f$, $\|A_1\hat{x}_k-b_1\|_{2} \leq \tilde{\e}_{eq}$, $\|(A_2\hat{x}_k-b_2)_+\|_{2} \leq \tilde{\e}_{in}$}{
Compute
\begin{equation}
\eta_k = (\eta_k^{(1)},\eta_k^{(2)})^T = \arg \min_{\lambda \in \Lambda} \left\{ \vp(\lambda_k) + \la \nabla \vp(\lambda_k), \lambda-\lambda_k \ra + \frac{L}{2} \|\lambda-\lambda_k\|_2^2 \right\}.
\notag
%\label{eq:alghxkp1}
\end{equation}

\begin{equation}
\zeta_k = (\zeta_k^{(1)},\zeta_k^{(2)})^T  = \arg \min_{\lambda \in \Lambda} \left\{  \sum_{i=0}^k{\alpha_i \left( \vp(\lambda_i) + \la \nabla \vp(\lambda_i), \lambda-\lambda_i \ra \right) } + \frac{L}{2} \|\lambda\|_2^2 \right\}.
\notag
%\label{eq:algzk}
\end{equation}
Set
\begin{equation}
\lambda_{k+1} = (\lambda_{k+1}^{(1)},\lambda_{k+1}^{(2)})^T  = \tau_k \zeta_k + (1- \tau_k) \eta_k.
\notag
%\label{eq:algxkp1}
\end{equation}
Set
\begin{equation}
\hat{x}_k = \frac{1}{C_k}\sum_{i=0}^k \alpha_i x(\lambda_i) = (1-\tau_{k-1})\hat{x}_{k-1}+\tau_{k-1}x(\lambda_k).
\notag
%\label{eq:xhat_def}
\end{equation}
Set $k=k+1$.

    }
\caption{Fast Primal-Dual Gradient Method}
\label{alg:FGM_gen}
\end{algorithm}

%Algorithm \ref{alg:FGM_gen} is a particular variant of FGM \cite{Devolder_2014_First} with starting point $\lambda_0=(0,0)^T$ and prox-function $d(\lambda) := \frac{\|\lambda\|_2^2}{2} = \frac{\|\lambda\|_2^2+\|\lambda^{(2)}\|_2^2}{2}$.

%Obviously
%$$
%\max_{\lambda \in \lambda} d(\lambda) \leq 4R^2.
%$$
%On every step of the Algorithm \ref{alg:FGM_gen}  one needs to find the gradient of the function $\vp(\lambda_k)$ and hence to solve the inner maximization problem in the problem $(P_2)$. We assume that for each $\lambda \in \lambda$ this problem is easily solvable and denote its solution by $x_k = x(\lambda_k^{\lambda}, \lambda^{(2)}_k^{\lambda})$. Denote $\hat{x}_k = \frac{1}{C_k}\sum_{i=0}^k \alpha_i x_i$. 

\begin{theorem}
Let the assumptions listed in the subsection \ref{S:main_assum} hold and $\alpha_i=\frac{i+1}{2}$, $i\geq 0$ in Algorithm \ref{alg:FGM_gen}. Then Algorithm \ref{alg:FGM_gen} will stop after not more than
$$
N_{stop} = \max \left\{ \left\lceil \sqrt{\frac{8L(R_1^2+R_2^2)}{\tilde{\e}_f}}  \right\rceil, \left\lceil \sqrt{\frac{8L(R_1^2+R_2^2)}{R_1\tilde{\e}_{eq}}} \right\rceil, \left\lceil \sqrt{\frac{8L(R_1^2+R_2^2)}{R_2\tilde{\e}_{in}}} \right\rceil \right\} -1
$$
iterations. 
Moreover after not more than 
%$$
%N = \max \left\{ \left\lceil \sqrt{\frac{8L(R_1^2+R_2^2)}{\e_f}}  \right\rceil, \left\lceil \sqrt{\frac{8L(R_1^2+R_2^2)}{\min\{\frac{\e_f}{2},R_1\e_{eq}\}}} \right\rceil, \left\lceil \sqrt{\frac{8L(R_1^2+R_2^2)}{\min\{\frac{\e_f}{2},R_2\e_{in}\}}} \right\rceil \right\} -1
%$$
$$
N = \max \left\{ \left\lceil \sqrt{\frac{16L(R_1^2+R_2^2)}{\e_f}}  \right\rceil, \left\lceil \sqrt{\frac{8L(R_1^2+R_2^2)}{R_1\e_{eq}}} \right\rceil, \left\lceil \sqrt{\frac{8L(R_1^2+R_2^2)}{R_2\e_{in}}} \right\rceil \right\} -1
$$
iterations of Algorithm \ref{alg:FGM_gen} the point $\hat{x}_N$ will be an approximate solution to Problem $(P_1)$ in the sense of \eqref{eq:sol_def}.
\label{Th:FGM_compl}
\end{theorem}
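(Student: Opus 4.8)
The plan is to pair the estimate-sequence inequality underlying the Fast Gradient Method with the explicit primal--dual structure of $\vp$; the key point is that this inequality itself does not need boundedness of $\Lambda$, so it survives in the present setting. First I would record what the analysis in \cite{Nesterov_2005_Smooth,Devolder_2014_First} actually delivers: for every $k\ge 0$,
\[
C_k\vp(\eta_k)\;\le\;\min_{\lambda\in\Lambda}\Big\{\tfrac{L}{2}\|\lambda\|_2^2+\sum_{i=0}^k\alpha_i\big(\vp(\lambda_i)+\la\nabla\vp(\lambda_i),\lambda-\lambda_i\ra\big)\Big\},
\]
the right-hand side being exactly the optimal value of the subproblem defining $\zeta_k$. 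From \eqref{eq:vp_def} and \eqref{eq:nvp} one gets the identity $\vp(\lambda_i)=\la\nabla\vp(\lambda_i),\lambda_i\ra-f(x(\lambda_i))$, so the $\la\nabla\vp(\lambda_i),\lambda_i\ra$ terms cancel and, with $\hat x_k=\frac1{C_k}\sum_{i=0}^k\alpha_i x(\lambda_i)$, the bracketed sum equals $-\sum_{i=0}^k\alpha_i f(x(\lambda_i))+C_k\la b_1-A_1\hat x_k,\lambda^{(1)}\ra+C_k\la b_2-A_2\hat x_k,\lambda^{(2)}\ra$. The minimization over $\Lambda$ then decouples into an unconstrained quadratic in $\lambda^{(1)}$ and a quadratic in $\lambda^{(2)}\ge 0$, both with closed-form minimizers; combining this with convexity of $f$ (note $\hat x_k\in Q$, being a convex combination of the $x(\lambda_i)$) and dividing by $C_k$ yields the master inequality
\[
\vp(\eta_k)+f(\hat x_k)+\tfrac{C_k}{2L}\big(\|A_1\hat x_k-b_1\|_2^2+\|(A_2\hat x_k-b_2)_+\|_2^2\big)\le 0 .
\]

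Next I would sandwich $f(\hat x_k)$. On one side, weak duality \eqref{eq:wD} together with \eqref{eq:D_1_P_2_sol} gives $\vp(\eta_k)\ge Opt[P_2]=-Opt[D_1]\ge -Opt[P_1]$; substituting into the master inequality shows $f(\hat x_k)\le Opt[P_1]$ and, keeping the quadratic term, $Opt[P_1]-f(\hat x_k)\ge\frac{C_k}{2L}\big(\|A_1\hat x_k-b_1\|_2^2+\|(A_2\hat x_k-b_2)_+\|_2^2\big)$. On the other side I would use the dual solution $\lambda^*$ from the assumptions: plugging $x=\hat x_k\in Q$ into the inner maximum in \eqref{eq:vp_def} gives $-Opt[D_1]=\vp(\lambda^*)\ge -f(\hat x_k)-\la\lambda^{*(1)},A_1\hat x_k-b_1\ra-\la\lambda^{*(2)},A_2\hat x_k-b_2\ra$; bounding $\la\lambda^{*(1)},A_1\hat x_k-b_1\ra\le R_1\|A_1\hat x_k-b_1\|_2$ by Cauchy--Schwarz and \eqref{eq:l_bound}, and — since $\lambda^{*(2)}\ge 0$ — bounding $\la\lambda^{*(2)},A_2\hat x_k-b_2\ra\le\la\lambda^{*(2)},(A_2\hat x_k-b_2)_+\ra\le R_2\|(A_2\hat x_k-b_2)_+\|_2$, together with strong duality $Opt[D_1]=Opt[P_1]$, I obtain $f(\hat x_k)\ge Opt[P_1]-R_1\|A_1\hat x_k-b_1\|_2-R_2\|(A_2\hat x_k-b_2)_+\|_2$.

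Writing $a=\|A_1\hat x_k-b_1\|_2$ and $c=\|(A_2\hat x_k-b_2)_+\|_2$, the two estimates for $Opt[P_1]-f(\hat x_k)$ combine to $\frac{C_k}{2L}(a^2+c^2)\le R_1 a+R_2 c\le\sqrt{R_1^2+R_2^2}\,\sqrt{a^2+c^2}$, hence $\sqrt{a^2+c^2}\le\frac{2L\sqrt{R_1^2+R_2^2}}{C_k}$. Together with $f(\hat x_k)\le Opt[P_1]$ and with $-(R_1 a+R_2 c)\le\vp(\eta_k)+f(\hat x_k)\le 0$ (again from the master inequality and the sandwich), this controls, at iteration $k$, each of $\|A_1\hat x_k-b_1\|_2$, $\|(A_2\hat x_k-b_2)_+\|_2$, $|f(\hat x_k)-Opt[P_1]|$, and the stopping quantity $|f(\hat x_k)+\vp(\eta_k)|$ by an explicit quantity of order $L(R_1^2+R_2^2)/C_k$. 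Finally, for $\alpha_i=(i+1)/2$ one has $C_k=\frac{(k+1)(k+2)}{4}\ge\frac{(k+1)^2}{4}$, so requiring each of these bounds to fall below the corresponding tolerance and solving for $k$ gives, after elementary manipulations, the stopping bound $N_{stop}$ (with the tolerances $\tilde\e_f,\tilde\e_{eq},\tilde\e_{in}$) and the complexity bound $N$ (with $\e_f,\e_{eq},\e_{in}$). I expect the delicate point to be the lower bound on $f(\hat x_k)$: one must check that $\hat x_k$ genuinely lies in $Q$ so that it is admissible in the inner maximum, and that the inequality constraints contribute only through the positive part $(A_2\hat x_k-b_2)_+$ rather than the full residual, which is precisely where $\lambda^{*(2)}\ge 0$ is used; one also relies on strong duality $Opt[P_1]=Opt[D_1]$, which holds here because $f$ is strongly convex (so $\vp$ is finite everywhere and $x(\lambda)$ is well defined) and $(D_1)$ is assumed solvable, but this deserves to be stated explicitly. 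The remaining ingredients — the cancellation in the linearized sum, the closed-form minimization over $\Lambda$, and the arithmetic turning the $O(1/C_k)$ rates into the iteration counts — are routine.
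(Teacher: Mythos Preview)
Your argument is correct and in fact sharper than the paper's, but the route differs at one key step. The paper does \emph{not} carry out the exact minimization over $\Lambda$: instead it restricts the outer minimization to the bounded set $\Lambda_R=\{\lambda:\lambda^{(2)}\ge 0,\ \|\lambda^{(1)}\|_2\le 2R_1,\ \|\lambda^{(2)}\|_2\le 2R_2\}$, bounds the quadratic by $2L(R_1^2+R_2^2)$, and then computes the \emph{maximum} of the linear form over $\Lambda_R$, arriving at
\[
\vp(\eta_k)+f(\hat x_k)+2R_1\|A_1\hat x_k-b_1\|_2+2R_2\|(A_2\hat x_k-b_2)_+\|_2\;\le\;\frac{2L(R_1^2+R_2^2)}{C_k}.
\]
Combined with the same lower bound $f(\hat x_k)\ge Opt[P_1]-R_1 a-R_2 c$ you use, this gives $R_1 a+R_2 c\le 2L(R_1^2+R_2^2)/C_k$ directly, whence $a\le 2L(R_1^2+R_2^2)/(R_1 C_k)$ and $c\le 2L(R_1^2+R_2^2)/(R_2 C_k)$, which are exactly the constants appearing in $N_{stop}$ and $N$. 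Your exact minimization yields instead the quadratic ``master inequality'' and then $\sqrt{a^2+c^2}\le 2L\sqrt{R_1^2+R_2^2}/C_k$, which is strictly better (for instance, $a\le 2L\sqrt{R_1^2+R_2^2}/C_k\le 2L(R_1^2+R_2^2)/(R_1 C_k)$); so the theorem's stated iteration counts are a fortiori sufficient under your analysis, and you should say so explicitly rather than claiming the constants match ``after elementary manipulations.'' What your route buys is a tighter dependence on $R_1,R_2$ and the pleasant fact $f(\hat x_k)\le Opt[P_1]$; what the paper's route buys is that the stated constants drop out with no further comparison. Finally, you are right to flag the reliance on strong duality in the lower bound $f(\hat x_k)\ge Opt[P_1]-R_1 a-R_2 c$: the paper uses it as well (its inequality $Opt[P_1]\le f(x)+\la\lambda^{*(1)},A_1x-b_1\ra+\la\lambda^{*(2)},A_2x-b_2\ra$ for all $x\in Q$ is precisely $Opt[P_1]\le\min_{x\in Q}L(x,\lambda^*)=Opt[D_1]$), just without naming it.
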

\begin{proof}

From the complexity analysis of the FGM \cite{Nesterov_2005_Smooth,Devolder_2014_First} one has
\begin{equation}
C_k \vp(\eta_k) \leq \min_{\lambda \in \Lambda} \left\{  \sum_{i=0}^k{\alpha_i \left( \vp(\lambda_i) + \la \nabla \vp(\lambda_i), \lambda-\lambda_i \ra \right) } + \frac{L}{2} \|\lambda\|_2^2 \right\}
\label{eq:FGM_compl}
\end{equation}
Let us introduce a set $\Lambda_R =\{\lambda = (\lambda^{(1)},\lambda^{(2)})^T: \lambda^{(2)} \geq 0, \|\lambda^{(1)}\|_2 \leq 2R_1, \|\lambda^{(2)}\|_2 \leq 2R_2 \}$ where $R_1$, $R_2$ are given in \eqref{eq:l_bound}. Then from \eqref{eq:FGM_compl} we obtain
\begin{align}
& C_k \vp(\eta_k) \leq \min_{\lambda \in \Lambda} \left\{  \sum_{i=0}^k{\alpha_i \left( \vp(\lambda_i) + \la \nabla \vp(\lambda_i), \lambda-\lambda_i \ra \right) } + \frac{L}{2} \|\lambda\|_2^2 \right\} \leq \notag \\
& \leq \min_{\lambda \in \Lambda_R} \left\{  \sum_{i=0}^k{\alpha_i \left( \vp(\lambda_i) + \la \nabla \vp(\lambda_i), \lambda-\lambda_i \ra \right) } + \frac{L}{2} \|\lambda\|_2^2 \right\} \leq \notag \\
& \leq \min_{\lambda \in \Lambda_R} \left\{  \sum_{i=0}^k{\alpha_i \left( \vp(\lambda_i) + \la \nabla \vp(\lambda_i), \lambda-\lambda_i \ra \right) } \right\} + 2L(R_1^2+R_2^2).
\label{eq:proof_st_1}
\end{align}
On the other hand from the definition \eqref{eq:vp_def} of $\vp(\lambda)$ we have
\begin{align}
& \vp(\lambda_i) = \vp(\lambda_i^{(1)},\lambda_i^{(2)}) = \la \lambda_i^{(1)}, b_1 \ra + \la \lambda_i^{(2)}, b_2 \ra + \notag \\
& +  \max_{x\in Q} \left( -f(x) - \la A_1^T \lambda_i^{(1)} + A_2^T \lambda_i^{(2)} ,x \ra \right) = \notag \\
& = \la \lambda_i^{(1)}, b_1 \ra + \la \lambda_i^{(2)}, b_2 \ra - f(x(\lambda_i)) - \la A_1^T \lambda_i^{(1)} + A_2^T \lambda_i^{(2)} ,x(\lambda_i) \ra . \notag
\end{align}
Combining this equality with \eqref{eq:nvp} we obtain
\begin{align}
& \vp(\lambda_i) - \la \nabla \vp (\lambda_i), \lambda_i \ra = \vp(\lambda_i^{(1)},\lambda_i^{(2)}) - \la \nabla \vp(\lambda_i^{(1)},\lambda_i^{(2)}), (\lambda_i^{(1)},\lambda_i^{(2)})^T \ra = \notag \\
& = \la \lambda_i^{(1)}, b_1 \ra + \la \lambda_i^{(2)}, b_2 \ra - f(x(\lambda_i)) - \la A_1^T \lambda_i^{(1)} + A_2^T \lambda_i^{(2)} ,x(\lambda_i) \ra -\notag \\
& - \la b_1-A_1 x(\lambda_i),\lambda_i^{(1)} \ra - \la b_2-A_2 x(\lambda_i),\lambda_i^{(2)} \ra = - f(x(\lambda_i)). \notag
\end{align}
Summing these inequalities from $i=0$ to $i=k$ with the weights $\{\alpha_i\}_{i=1,...k}$ we get, using the convexity of $f(\cdot)$
\begin{align}
&  \sum_{i=0}^k{\alpha_i \left( \vp(\lambda_i) + \la \nabla \vp(\lambda_i), \lambda-\lambda_i \ra \right) } = \notag \\
& = -\sum_{i=0}^k \alpha_i f(x(\lambda_i)) + \sum_{i=0}^k \alpha_i \la (b_1-A_1 x(\lambda_i),b_2-A_2 x(\lambda_i))^T, (\lambda^{(1)},\lambda^{(2)})^T \ra \leq \notag \\
& \leq -C_kf(\hat{x}_k) + C_k \la (b_1-A_1 \hat{x}_k,b_2-A_2 \hat{x}_k)^T, (\lambda^{(1)},\lambda^{(2)})^T \ra . \notag
\end{align}
Substituting this inequality to \eqref{eq:proof_st_1} we obtain
\begin{align}
& C_k \vp(\eta_k)  \leq -C_kf(\hat{x}_k) + \notag \\
& + C_k \min_{\lambda \in \Lambda_R} \left\{  \la (b_1-A_1 \hat{x}_k,b_2-A_2 \hat{x}_k)^T, (\lambda^{(1)},\lambda^{(2)})^T \ra \right\} + 2L(R_1^2+R_2^2) . \notag
\end{align}
Finally, since 
\begin{align}
&\max_{\lambda \in \Lambda_R} \left\{  \la (-b_1+A_1 \hat{x}_k,-b_2+A_2 \hat{x}_k)^T, (\lambda^{(1)},\lambda^{(2)})^T \ra \right\} = \notag \\
& =2 R_1 \|A_1 \hat{x}_k - b_1 \|_2 + 2R_2 \| (A_2 \hat{x}_k - b_2)_+ \|_2, \notag
\end{align} 
we obtain
\begin{equation}
\vp(\eta_k) + f(\hat{x}_k) +2 R_1 \|A_1 \hat{x}_k - b_1 \|_2 + 2R_2 \| (A_2 \hat{x}_k - b_2)_+ \|_2   \leq \frac{2L(R_1^2+R_2^2)}{C_k}.
\label{eq:vpmfxh}
\end{equation}
Since  $\lambda^*=(\lambda^{*(1)}, \lambda^{*(2)})^T$ is an optimal solution of Problem $(D_1)$ we have for any $x \in Q$
$$
Opt[P_1]\leq f(x) + \la  \lambda^{*(1)}, A_1 x -b_1 \ra + \la \lambda^{*(2)}, A_2 x -b_2 \ra.
$$
Using the assumption \eqref{eq:l_bound} and that $\lambda^{*(2)} \geq 0$ we get
\begin{equation}
f(\hat{x}_k) \geq Opt[P_1]- R_1 \|A_1 \hat{x}_k - b_1 \|_2- R_2 \|(A_2 \hat{x}_k - b_2)_+ \|_2.
\label{eq:fxhat_est}
\end{equation}
Hence
\begin{align}
& \vp(\eta_k) + f(\hat{x}_k)  = \vp(\eta_k) - Opt[P_2]+Opt[P_2] + Opt[P_1]  - Opt[P_1] + f(\hat{x}_k) \stackrel{\eqref{eq:D_1_P_2_sol}}{=} \notag \\
& = \vp(\eta_k)  - Opt[P_2]-Opt[D_1]+Opt[P_1]  - Opt[P_1] + f(\hat{x}_k)  
 \stackrel{\eqref{eq:wD}}{\geq} \notag \\
&  \geq - Opt[P_1] + f(\hat{x}_k) \stackrel{\eqref{eq:fxhat_est}}{\geq} - R_1 \|A_1 \hat{x}_k - b_1 \|_2- R_2 \|(A_2 \hat{x}_k - b_2)_+ \|_2.
\label{eq:aux1}
\end{align}
This and \eqref{eq:vpmfxh} give
\begin{equation}
R_1 \|A_1 \hat{x}_k - b_1 \|_2 + R_2 \| (A_2 \hat{x}_k - b_2)_+ \|_2 \leq \frac{2L(R_1^2+R_2^2)}{C_k}.
\label{eq:R_norm_est}
\end{equation}
Hence we obtain
\begin{equation}
\vp(\eta_k) + f(\hat{x}_k) \stackrel{\eqref{eq:aux1},\eqref{eq:R_norm_est}}{\geq} - \frac{2L(R_1^2+R_2^2)}{C_k}.
\label{eq:vppfxhatgeq}
\end{equation}
On the other hand we have 
\begin{equation}
\vp(\eta_k) + f(\hat{x}_k) \stackrel{\eqref{eq:vpmfxh}}{\leq} \frac{2L(R_1^2+R_2^2)}{C_k}.
\label{eq:vppfxhatleq}
\end{equation}
Combining \eqref{eq:R_norm_est}, \eqref{eq:vppfxhatgeq}, \eqref{eq:vppfxhatleq} we conclude
\begin{align}
&\|A_1 \hat{x}_k - b_1 \|_2 \leq \frac{2L(R_1^2+R_2^2)}{C_kR_1}, \notag \\
&\| (A_2 \hat{x}_k - b_2)_+ \|_2 \leq \frac{2L(R_1^2+R_2^2)}{C_kR_2 }, \notag \\
&|\vp(\eta_k) + f(\hat{x}_k)| \leq \frac{2L(R_1^2+R_2^2)}{C_k}.
\label{eq:untileq}
\end{align}
As we know for the chosen sequence $\alpha_i = \frac{i+1}{2}, i \geq 0$ it holds that $C_k=\frac{(k+1)(k+2)}{4} \geq \frac{(k+1)^2}{4}$.
Then in accordance to \eqref{eq:untileq} after given in the theorem statement number $N_{stop}$ of the iterations of Algorithm \ref{alg:FGM_gen} the stopping criterion will fulfill and Algorithm \ref{alg:FGM_gen} will stop.

Now let us prove the second statement of the theorem. 
We have 
\begin{align}
& \vp(\eta_k) + Opt[P_1] = \vp(\eta_k)  - Opt[P_2]+Opt[P_2]+Opt[P_1] \stackrel{\eqref{eq:D_1_P_2_sol}}{=} \notag \\
& = \vp(\eta_k)  - Opt[P_2]-Opt[D_1]+Opt[P_1] \stackrel{\eqref{eq:wD}}{\geq} 0. 
\notag
%\label{eq:vppopt}
\end{align} 
Hence
\begin{equation}
f(\hat{x}_k)- Opt[P_1] \leq f(\hat{x}_k) + \vp(\eta_k)
\label{eq:fxhatmopt}
\end{equation}
On the other hand 
\begin{equation}
f(\hat{x}_k) - Opt[P_1] \stackrel{\eqref{eq:fxhat_est}}{\geq}  - R_1 \|A_1 \hat{x}_k - b_1 \|_2- R_2 \|(A_2 \hat{x}_k - b_2)_+ \|_2.
\label{eq:fxhat_est1}
\end{equation}
Note that since the point $\hat{x}_k$ may not satisfy the equality and inequality constraints one can not guarantee that $f(\hat{x}_k) - Opt[P_1] \geq 0$.
From \eqref{eq:fxhatmopt}, \eqref{eq:fxhat_est1} we can see that if we set $\tilde{\e}_f = \e_f$, $\tilde{\e}_{eq} = \min\{\frac{\e_f}{2R_1},\e_{eq}\}$, $\tilde{\e}_{in} = \min\{\frac{\e_f}{2R_2},\e_{in}\}$ and run Algorithm \ref{alg:FGM_gen} for $N$ iterations, where $N$ is given in the theorem statement, we obtain that \eqref{eq:sol_def} fulfills and $\hat{x}_N$ is an approximate solution to Problem $(P_1)$ in the sense of \eqref{eq:sol_def}. \qed
\end{proof}
Note that other authors \cite{Cuturi_2013_Sinkhorn,Benamou_2015_Iterative,Polyak_2013_Dual,Necoara_2008_Applications,Goldstein_2012_Fast,Shefi_2014_Rate,Gasnikov_2015_Entropy} do not provide the complexity analysis for their algorithms when the accuracy of the solution is given by \eqref{eq:sol_def}.

\section{Preliminary Numerical Experiments}
\label{S:3}
To compare our algorithm with the existing algorithms we choose the problem \eqref{eq:ROT} of regularized optimal transport \cite{Cuturi_2013_Sinkhorn} which is a special case of Problem $(P_1)$. The first reason is that despite insufficient theoretical analysis the existing balancing type methods for solving this type of problems are known to be very efficient in practice \cite{Cuturi_2013_Sinkhorn} and provide a kind of benchmark for any new method. The second reason is that ROT problem have recently become very popular in application to image analysis based on Wasserstein spaces geometry \cite{Cuturi_2013_Sinkhorn,Benamou_2015_Iterative}.

Our numerical experiments were carried out on a PC with CPU Intel Core i5 (2.5Hgz), 2Gb of RAM using Matlab 2012 (8.0).
%In the subsections below we are describing experiments specifics and providing on graphs and in tables results.
We compare proposed in this article Algorithm \ref{alg:FGM_gen} (below we refer to it as FGM) with the following algorithms
\begin{itemize}
	\item Applied to the dual problem $(D_1)$ Conjugate Gradient Method in the Fletcher--Reeves form \cite{Fletcher_1964_Function} with the stepsize chosen by one-dimensional minimization. We refer to this algorithm as CGM.
	\item The algorithm proposed in \cite{Gasnikov_2015_Entropy} and based on the idea of Tikhonov's regularization of the dual problem $(D_1)$. In this approach the regularized dual problem is solved by the Fast Gradient Method \cite{Nesterov_2005_Smooth}. We will refer to this algorithm as REG;
	\item Balancing method \cite{Bregman_1967_Proof,Cuturi_2013_Sinkhorn} which is a special type of a fixed-point-iteration method for the system of the optimality conditions for the ROT problem. It is referred below as BAL.
\end{itemize}

The key parameters of the ROT problem in the experiments are as follows

\begin{itemize}
	\item $n := dim(E) = p^2$ -- problem dimension, varies from $2^4$ to $9^4$;
	\item $m_1 := dim(H_1)= 2\sqrt{n}$ and $m_2= dim(H_2) = 0$ -- dimensions of the vectors $b_1$ and $b_2$ respectively;
	\item $c_{ij}$, $i,j = 1,p$ are chosen as squared Euclidean pairwise distance between the points in a $\sqrt{p} \times \sqrt{p}$ grid originated by a 2D image \cite{Cuturi_2013_Sinkhorn,Benamou_2015_Iterative};
	\item $a_1$ and $a_2$ are random vectors in $S_{m_1}(1)$ and $b_1 = (a_1,a_2)^T$;
	\item the regularization parameter $\gamma$ varies from 0.001 to 1;
	\item the desired accuracy of the approximate solution in \eqref{eq:sol_def} is defined by its relative counterpart $\varepsilon_f^{rel}$ and $\varepsilon_g^{rel}$ as follows
	$$ 
	\varepsilon_f = \varepsilon_f^{rel} \cdot f(x(\lambda_0)) \quad \varepsilon_{eq} = \varepsilon_g^{rel} \cdot \|A_1x(\lambda_0) - b_1\|_2, 
	$$
	where $\lambda_0$ is the starting point of the algorithm.
	Note that $\e_{in} = 0$ since no inequality constraints are present in the ROT problem.
	%By default we set $\varepsilon_f^{rel} =\varepsilon_g^{rel} = 0.01$. In some experiments the relative accuracy varies from 0.001 to 0.01.
\end{itemize}

Figure~\ref{fig:exp_res_cmp_scale} shows the number of iterations for the FGM, BAL and CGM methods depending on the inverse of the regularization parameter $\gamma$. The results for the REG are not plotted since this algorithm required one order of magnitude more iterations than the other methods. In these experiment we chose $n = 2401$ and $\varepsilon_f^{rel} =\varepsilon_g^{rel} = 0.01$. One can see that the complexity of the FGM (i.e. proposed Algorithm \ref{alg:FGM_gen}) depends nearly linearly on the value of $1/\gamma$ and this complexity is smaller than that of the other methods when $\gamma$ is small.

\begin{figure}[h]
\centering
\includegraphics[width=\textwidth]{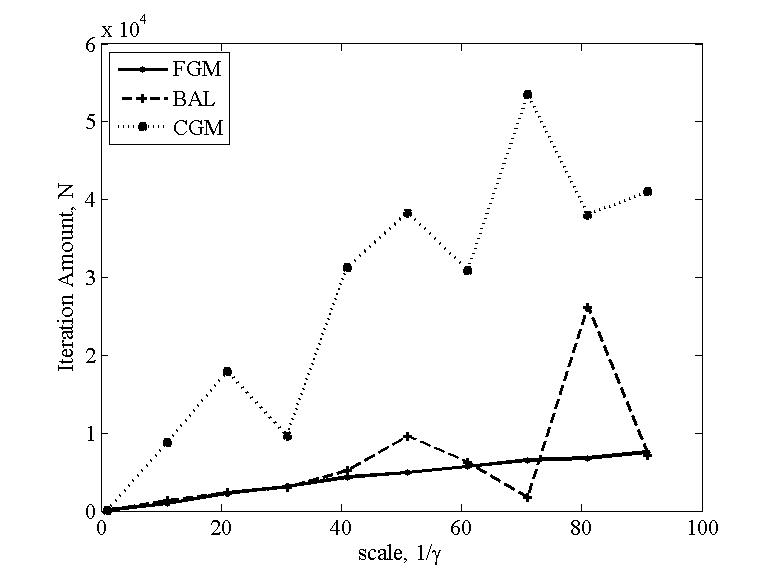}
\caption{Complexity of FGM, BAL and CGM as $\gamma$ varies}
\label{fig:exp_res_cmp_scale}
\end{figure}

Figure~\ref{fig:exp_res_cmp_acc} shows the  the number of iterations for the FGM, BAL and CGM methods depending on the relative error $\varepsilon^{rel}$. The results for the REG are not plotted since this algorithm required one order of magnitude more iterations than the other methods. In these experiment we chose $n = 2401$, $\gamma = 0.1$ and $\varepsilon_f^{rel} =\varepsilon_g^{rel} = \varepsilon^{rel}$. One can see that in half of the cases the FGM (i.e. proposed Algorithm \ref{alg:FGM_gen}) performs better or equally to the other methods.

\begin{figure}[h]
\centering
\includegraphics[width=\textwidth]{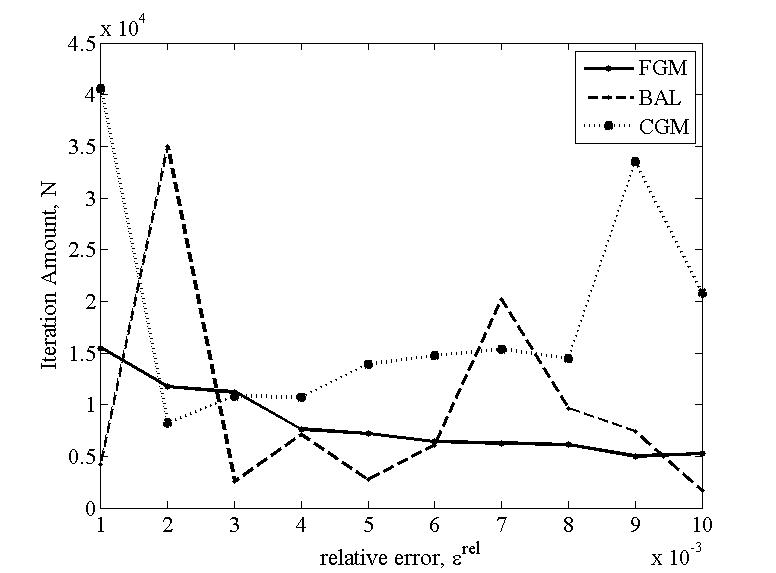}
\caption{Complexity of FGM, BAL and CGM as the desired relative accuracy varies}
\label{fig:exp_res_cmp_acc}
\end{figure}

%In the experiments it was also observed that the cost of one iteration for FGM is similar to BAL and REG, while the cost of one iteration for CGM is significantly higher. 

%One can see that suggested prime dual fast gradient method demonstrates good results in case of small $\gamma$ values in comparison with Bregman's balancing method, Conjugative Gradients method and regularized approach. It is also worth to mention that that method suggested becomes significantly better than the other mentioned methods with smaller $\gamma$ values.

%On the other hand for the relatively high values of $\gamma$ parameter methods demonstrate comparable results but suggested method demonstrates smoothly behaviour or in the other words iterations amount / calculation time is approximate the same for the same input parameters in case of changes in right values ($a$ and $b$) of the feasible set.

\section*{Conclusion}
This paper proposes a new primal-dual approach to solve a general class of problems stated as Problem $(P_1)$. Unlike the existing methods, we managed to provide the convergence rate for the proposed algorithm in terms of the primal problem error $|f(\hat{x}_k - Opt[P_1]|$ and the linear constraints infeasibility $\|A_1\hat{x}_k-b_1\|_{2}$, $\|(A_2\hat{x}_k-b_2)_+\|_{2}$. Our numerical experiments show that our algorithm performs better than existing methods for problems of regularized optimal transport which are a special instance of Problem $(P_1)$ for which there exist efficient algorithms.

%\textbf{Acknowledgements.} The research by A. Gasnikov and P. Dvurechensky presented in the Section 2 was conducted in IITP RAS and supported by the Russian Science Foundation grant (project 14-50-00150), the research by A. Gasnikov and P. Dvurechensky presented in the Section 3 was partially supported by RFBR, research project No. 15-31-20571 mol\_a\_ved. The research by A. Chernov presented in the Section 3 was partially supported by RFBR, research project No.14-01-00722-a.

%
% ---- Bibliography ----
%

\end{document}